\tikzset{bluenode/.style={circle,fill=gray!50,minimum size=0.4cm,inner sep=0pt},}
\tikzset{rednode/.style={circle,fill=black!100,minimum size=0.4cm,inner sep=0pt},}
\DeclareMathOperator{\aut}{Aut}
\DeclareMathOperator{\supp}{supp}
\DeclareMathOperator{\card}{card}
\theoremstyle{definition}
\newtheorem{ex}[theorem]{Example}
 \journalname{Theory in Biosciences}
\begin{document}

\title{Geometry and symmetry in biochemical reaction systems \thanks{This work was supported by The Alan Turing Institute under the EPSRC grant EP/N510129/1.}
}
%\subtitle{Do you have a subtitle?\\ If so, write it here}

%\titlerunning{Short form of title}        % if too long for running head

\author{Raffaella Mulas  \and Rubén J. Sánchez-García \and Ben D. MacArthur}

\authorrunning{R. Mulas, R. J. Sánchez-García, B. D. MacArthur} % if too long for running head

\institute{Mathematical Sciences, University of Southampton, UK\\
Institute of Life Sciences, University of Southampton, UK\\
The Alan Turing Institute, London, UK\\
\email{R.Mulas@soton.ac.uk (Raffaella Mulas)}
}

\date{Received: date / Accepted: date}
% The correct dates will be entered by the editor

\maketitle

\begin{abstract}
Complex systems of intracellular biochemical reactions have a central role in regulating cell identities and functions. Biochemical reaction systems are typically studied using the language and tools of graph theory. However, graph representations only describe pairwise interactions between molecular species, and so are not well suited to modelling complex sets of reactions that may involve numerous reactants and/or products. Here we make use of a recently-developed hypergraph theory of chemical reactions that naturally allows for higher-order interactions to explore the geometry and quantify functional redundancy in biochemical reactions systems. Our results constitute a general theory of automorphisms for oriented hypergraphs and describe the effect of automorphism group structure on hypergraph Laplacian spectra.
\keywords{Complex systems \and Hypergraphs \and Spectral properties \and Symmetry}
% \PACS{PACS code1 \and PACS code2 \and more}
% \subclass{MSC code1 \and MSC code2 \and more}
\end{abstract}

\section{Introduction}

Many real-world complex systems can be modelled as graphs in which vertices represent system elements and edges pairwise interactions between those elements \cite{newman2018networks,barabasi2016network}. This approach allows powerful tools from graph theory to be used in the analysis of complex systems in numerous domains -- from technological networks such as power grids, the internet and world-wide-web to biological networks such as food webs and molecular interaction networks, and social networks such as those that arise in online social media -- and has been tremendously successful in discerning important structural and dynamical properties of the complex systems they represent \cite{newman2003structure}. 

Notably, a number of features are common to many disparate real-world networks, and they have also been observed in classical random graph models, such as the Barabási-Albert model and the Watts-Strogatz model. Examples include the presence of highly connected `hub' vertices \cite{barabasi1999emergence}, over-representation of important sub-graphs or `motifs' \cite{milo2002network} and the presence of local clustering \cite{watts1998collective}. In recent years it has also become clear that many real-world networks also contain a large amount of structural redundancy (i.e.~duplication of structural features), which, in turn, relates to the robustness and resilience of the underlying system. 

Mathematically, the presence of structural redundancy is quantified by the graph automorphism group \cite{symm1}, which identifies structurally equivalent vertices and edges. This allows tools from group theory to be used in network analysis and has seen a number of fruitful applications most notably in studies of robustness and resilience, efficient communication, group consensus, anonymization, compression, and patterns of network collective dynamics such as synchronisation \cite{symm1,symm3,pecora2014cluster,klickstein2019symmetry,wu2010k}. 

Moreover, a powerful tool for studying the structural properties of graphs is \emph{spectral theory}. Given a graph $\Gamma$ and a square matrix associated with $\Gamma$, such as its adjacency matrix $A$, its Kirchhoff Laplacian $\Delta$ or its normalised Laplacian $L$, the \emph{spectrum} of each of these operators, i.e.\ the multiset of its eigenvalues, is known to encode many important qualitative properties of $\Gamma$ \cite{Chung,Brouwer}. Spectral theory studies the properties that are encoded by the spectra of these operators and it is common both in pure mathematics and in applied sciences. Notably, for \emph{regular} graphs, i.e.\ graphs in which all vertices have the same degree, the spectral properties of $A$, $\Delta$ and $L$ are equivalent, as their eigenvalues only differ by an additive or a multiplicative constant in this case. For general graphs, the spectral properties of the three matrices may be slightly different, although they are typically strongly related. Also, since $A$ has both positive and negative eigenvalues while $\Delta$ and $L$ have non-negative eigenvalues, studying the spectral properties of the Laplacian matrices is often easier. Moreover, the eigenvalues of $L$ are normalised with respect to the eigenvalues of $\Delta$ and they are related to random walks on graphs, therefore studying spectral theory from the point of view of the normalised Laplacian is often preferred -- and is the approach we will take here. We refer to \cite{Chung,Brouwer} for two classical monographs on this subject. 

However, graph theory-based analyses necessarily only consider system elements and their \emph{pairwise} interactions. In many cases, higher-order interactions are also important and can play a significant part in system function \cite{carlsson2009topology}. There is increasing interest in accounting for such higher-order structures, for example by encoding system structures either as simplicial complexes, which can be analysed using tools from algebraic topology, or, more generally, as hypergraphs. Both approaches have proven successful and are active areas of current research \cite{zomorodian2005topology,Hypergraphs,Cellularnetworks,horak2013}. 

The role of higher-order interactions is particularly important when considering systems of chemical reactions. For example, proteins typically perform their functions in cells by interacting physically to form chemical complexes. While protein-protein interaction networks enumerate possible pairwise interactions, they are not able to unambiguously capture the formation of higher order complexes involving three or more proteins. More generally, biochemical reactions typically involve more than two reactants and/or products. Thus, complex systems of biochemical reactions are not well described using the language of graph theory. Yet, they can be well modelled using hypergraphs which allow \emph{hyper}edges involving more than two vertices. 

Here we develop a general theory of automorphisms for \emph{oriented hypergraphs}: a generalisation of classical hypergraphs with the additional structure that each vertex in a hyperedge is either an input or an output. Oriented hypergraphs were introduced in \cite{shi1992} and, as shown in \cite{Hypergraphs}, they are a useful tool for the modelling of chemical reaction networks. The adjacency matrix and the Kirchhoff Laplacian for oriented hypergraphs were introduced in \cite{ReffRusnak}, as a generalization of the classical ones for graphs. Moreover, the normalised Laplacian for oriented hypergraphs was introduced in \cite{Hypergraphs}. The spectral properties of these operators, as well as possible applications, have been widely studied, see for instance \cite{Hypergraphs,Master-Stability,Sharp,MulasZhang,AndreottiMulas,spectralclasses,hyp2014,hyp2015,hyp2019,chen2018,ReffRusnak}, yet a general framework to study oriented hypergraph automorphisms is still lacking. As in the graph case, the spectral properties of these three operators are similar; the adjacency matrix has both positive and negative eigenvalues while the Laplacian matrices have non-negative eigenvalues, and the spectrum of $L$ is normalised with respect to the spectrum of $\Delta$. For this reason, we will focus on spectral properties of the normalised Laplacian matrix here. 

The paper is structured as follows. In Section \ref{section:prel} we provide an overview of some required definitions related to oriented hypergraphs. In Section \ref{section:special} we show how the classical theory of graph automorphisms can be extended to hypergraphs, and outline some key differences between graph and hypergraph automorphisms. In Section \ref{section:signed} we propose a further extension of this theory that takes hyperedge signs into account. We conclude with a discussion of the relevance of this general theory to systems of biochemical reactions.

\section{Preliminary definitions}\label{section:prel}
We start by introducing some preliminary definitions. We keep the set of definitions limited to those strictly needed for the new results in later sections.

\begin{definition}[\cite{shi1992}]
An \emph{oriented hypergraph} is a pair $\Gamma=(V,H)$ where $V$ is a finite set of \emph{vertices} and $H$ is a set such that every element $h \in H$ is a pair of disjoint subsets of vertices $h=(h_{in},h_{out})$ (input and output), that is, $h_{in}, h_{out} \in \mathcal{P}(V)$, where we write $\mathcal{P}(V)$ for the power set of $V$. The elements of $H$ are called the \emph{oriented hyperedges} (or, simply, \emph{hyperedges}). Changing the orientation of a hyperedge $h$ means exchanging its input and output, leading to the pair $(h_{out},h_{in})$. The \emph{vertices of a hyperedge} $h=(h_{in},h_{out})$ are the elements of $h_{in} \cup h_{out} \subseteq V$. Two vertices in $i,j \in h$ are called \emph{co-oriented} if $i,j \in h_{in}$ or $i, j \in h_{out}$, and \emph{anti-oriented} otherwise.
\end{definition}

A classical hypergraph can be seen as an oriented hypergraph if one forgets about the input-output structure. In this sense, oriented hypergraphs generalise the standard notion of hypergraphs \cite{bretto2013hypergraph}. 
To illustrate these ideas, Figure \ref{figure:toy} shows an oriented hypergraph with five vertices and two hyperedges.
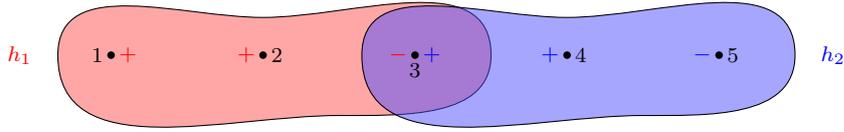
\begin{figure}[t!]
	\begin{center}
		\begin{tikzpicture}
		\node (v1) at (1,0) {};
		\node (v2) at (3,0) {};
		\node (v3) at (5,0) {};
		\node (v4) at (7,0) {};
		\node (v5) at (9,0) {};
		
		\begin{scope}[fill opacity=0.5]
		\filldraw[fill=red!70] ($(v1)+(-0.7,0)$) 
		to[out=90,in=180] ($(v2) + (0,0.5)$) 
		to[out=0,in=90] ($(v3) + (1,0)$)
		to[out=270,in=0] ($(v2) + (1,-0.8)$)
		to[out=180,in=270] ($(v1)+(-0.7,0)$);
		\filldraw[fill=blue!70] ($(v3)+(-0.7,0)$) 
		to[out=90,in=180] ($(v4) + (0,0.5)$) 
		to[out=0,in=90] ($(v5) + (1,0)$)
		to[out=270,in=0] ($(v4) + (1,-0.8)$)
		to[out=180,in=270] ($(v3)+(-0.7,0)$);
		\end{scope}

		\fill (v1) circle (0.05) node [left] {$1$} node [right] {\color{red}$+$};
		\fill (v2) circle (0.05) node [right] {$2$} node [left] {\color{red}$+$};
		\fill (v3) circle (0.05) node [below] {$3$} node [right] {\color{blue}$+$} node [left] {\color{red}$-$};
		\fill (v4) circle (0.05) node [right] {$4$} node [left] {\color{blue}$+$};
		\fill (v5) circle (0.05) node [right] {$5$} node [left] {\color{blue}$-$};

		\node at (-0.2,0) {\color{red} $h_1$};
		\node at (10.5,0) {\color{blue} $h_2$};
		\end{tikzpicture}
	\end{center}
	\caption{\emph{Example hypergraph}. An oriented hypergraph with five vertices $1$ to $5$ and two hyperedges $h_1$ and $h_2$. The hyperedge $h_1$ has $1$ and $2$ as inputs and $3$ as output; the hyperedge $h_2$ has $3$ and $4$ as inputs and $5$ as output.}
	\label{figure:toy}
\end{figure}

Oriented hypergraphs offer a valid model for biochemical networks \cite{Hypergraphs}. Each vertex may be thought of as a chemical substance and each hyperedge as a chemical reaction involving the substances that it contains as vertices (i.e.\ reactants and/or products of the reaction). The input-output structure then represents the reactant-product structure of chemical reactions. 

\begin{definition}[\cite{ReffRusnak}]The \emph{degree} of a vertex $i$, denoted $\deg(i)$, is the number of hyperedges containing $i$. The \emph{cardinality} of a hyperedge $h$, denoted $\card(h)$, is the number of vertices in $h$.
\end{definition}

For the rest of this article, let us fix such an oriented hypergraph $\Gamma=(V,H)$ on $n$ vertices labelled $1,2,\ldots,n$ (that is, we assume $V=\{1,2,\ldots,n\}$) and $m$ hyperedges $h_1,\ldots, h_m$. We also assume that $\Gamma$ has no vertices of degree zero, that is, every vertex belongs to at least one hyperedge. We define the following matrices associated with $\Gamma$.

\begin{definition}[\cite{Hypergraphs}] \label{def:incidencematrix}
The $n\times m$ \emph{incidence matrix} of $\Gamma$ is $\mathcal{I}=\mathcal{I}(\Gamma)=(\mathcal{I}_{ih})_{i\in V, h\in H}$, where
	\begin{equation*} 
	\mathcal{I}_{ih}:=\begin{cases} 1 & \text{ if }i\in h_{in}\\ -1 & \text{ if }i\in h_{out}\\ 0 & \text{otherwise.} \end{cases}
	\end{equation*}
We call $\mathcal{I}_{ih}$ the \emph{sign} of vertex $i$ in hyperedge $h$, and use the `$+$' or `$-$' symbols to represent non-zero signs in a graphical representation of a hypergraph (e.g.~Fig.~\ref{figure:toy}). 
\end{definition}

\begin{definition}[\cite{ReffRusnak}]
The $n\times n$ diagonal \emph{degree matrix} of $\Gamma$ is $D=D(\Gamma)=(D_{ij})$, where
    \begin{equation*}
    D_{ij}:=\begin{cases} \deg (i) & \text{if }i=j\\ 0 & \text{otherwise}. \end{cases} 
\end{equation*}
    \end{definition}
    
Given vertices $i, j \in V$, let us write $\deg^+(i,j)$ for the number of hyperedges in which $i$ and $j$ are co-oriented, and $\deg^-(i,j)$ for the number of hyperedges in which $i$ and $j$ are anti-oriented. Note that $\deg(i)=\deg^+(i,i)$, $\deg^-(i,i)=0$, and they are both symmetric functions: $\deg^\pm(i,j)=\deg^\pm(j,i)$ for all $i, j \in V$.

\begin{definition}[\cite{ReffRusnak}]\label{def:adjacencymatrix}
The $n\times n$ \emph{adjacency matrix} of $\Gamma$ is the symmetric matrix $A=A(\Gamma)=(A_{ij})$, where $A_{ii}=0$ for all $i$ and
\begin{equation}\label{eqn:adjmatrix}
        A_{ij}:= \deg^-(i,j)-\deg^+(i,j)
\end{equation}
for all $i\neq j$. 
\end{definition}

\begin{definition}[\cite{ReffRusnak}]\label{def:unnormalisedlaplacianmatrix}
The $n\times n$ \emph{Kirchhoff Laplacian matrix} of $\Gamma$ is $\Delta=\Delta(\Gamma)=(\Delta_{ij})$, where $\Delta=D-A$. That is, 
\begin{equation}\label{eqn:unnormalisedlaplacian}
        \Delta_{ij}:= \deg^+(i,j)-\deg^-(i,j)
\end{equation}
for all $i, j$.
\end{definition}

\begin{definition}\label{def:normalisedlaplacianmatrix}
The $n\times n$ \emph{normalised Laplacian matrix} of $\Gamma$, $L=L(\Gamma)=(L_{ij})$, is $L=D^{-1}\Delta= I - D^{-1}A $, where $I$ is the $n \times n$ identity matrix. (Note that $D$ is invertible as we have removed all vertices of degree 0.) The entries of $L$ are
\begin{equation}\label{eqn:normalisedlaplacian}
        L_{ij}:= \frac{\deg^+(i,j)-\deg^-(i,j)}{\deg(i)}
\end{equation}
for all $i, j$.
\end{definition}

The Kirchhoff Laplacian matrix $\Delta$ is symmetric but the normalised Laplacian $L$ is not. However, $L$ is \emph{isospectral} (meaning that it has the same eigenvalues, counted with multiplicity) to the symmetric matrix
\begin{equation}\label{eq:ChungLaplacian}
    \mathcal{L}:=D^{1/2}L D^{-1/2}
\end{equation}
(see e.g.~\cite[Remark 2.14]{MulasZhang}) and thus has real eigenvalues. Note that the incidence matrix $\mathcal{I}$ uniquely determines the hypergraph, but, unlike graphs, this is not true for the adjacency or Laplacian matrices: two distinct hypergraphs may have the same adjacency, or Laplacian, matrix. To see this, consider the following simple example:

\begin{ex}
Let $\Gamma=(V,H)$ and $\Gamma'=(V,H')$ be two hypergraphs with vertex set $V=\{1,2,3\}$ and hyperedge sets $H=\{h_1,h_2\}$ and $H'=\{h'_1,h'_2\}$, where
\begin{itemize}
    \item $h_1$ has $1$ as input and $2$ as output, and $h_2$ has $1$ and $2$ as inputs and $3$ as output;
    \item $h'_1$ has $1$ as input and $3$ as output, and $h'_2$ has $2$ as input and $3$ as output.
\end{itemize}
These two hypergraphs are distinct ($\Gamma'$ is a graph but $\Gamma$ is not), but have the same adjacency matrix,
\begin{equation*}
    A=\begin{pmatrix}
        0 & 0 & 1\\
        0 & 0 & 1\\
        1 & 1 & 0
\end{pmatrix}.
\end{equation*}
(The cancellation $\deg^-(1,2)-\deg^+(1,2)=1-1=0$ for $\Gamma$ is undetected by this matrix.) 
\end{ex}

The terminology and matrices introduced so far generalise the similar concepts in graph theory. A simple graph $G=(V,E)$ with a choice of edge orientations is the same as an oriented hypergraph $\Gamma=(V,H)$ with $|h_{in}|=|h_{out}|=1$ for all $h=(h_{in},h_{out})\in H$. In this case, the degree of a vertex in $\Gamma$ is the same as in $G$, $d^+(i,j)=0$ for all $i \neq j$, and $d^-(i,j)=1$ if $i$ and $j$ are connected by an edge, and 0 otherwise. In particular, the degree, adjacency and Laplacian matrices for $\Gamma$ coincide with the usual definitions from graph theory for $G$.

Collectively, these results therefore indicate that properties of hypergraphs may be encoded in matrix representations that have some similarities to those of graphs, as well as some important differences. In the following sections we will outline how structural hypergraph properties -- in particular, those related to redundancy -- manifest the in hypergraph spectra. In order to motivate these general results we first introduce some established results concerning the effect of various simple structural features of an oriented hypergraph on the spectrum of its hypergraph normalised Laplacian \cite{MulasZhang,AndreottiMulas}. Some additional definitions, below, are needed to understand these features.

\begin{definition}
The \emph{auxiliary graph} of $\Gamma$, written $G(\Gamma)$, as the graph with adjacency matrix $A(\Gamma)$. This is an undirected, weighted graph with the same vertex set as $\Gamma$ and an edge between $i$ and $j$ weighted by $A_{ij} \neq 0$, and no such edge if $A_{ij}=0$.
\end{definition}

\begin{definition}[\cite{MulasZhang}]\label{def:duplicate}
Two distinct vertices $i$ and $j$ are \emph{duplicate} if the corresponding rows (equivalently, columns) of the adjacency matrix are the same, that is, 
if $A_{ik}=A_{jk}$ (or, equivalently, $A_{ki}=A_{kj}$) for all $k\in V$. In particular, $A_{ij}=A_{ji}=A_{ii}=A_{jj}=0$.
\end{definition}
\begin{definition}[\cite{AndreottiMulas}]\label{def:twin}
Two distinct vertices $i$ and $j$ are \emph{twin} if they belong to exactly the same set of hyperedges, with the same orientations, that is, 
\[
    i \in h_{in} \iff j \in h_{in} \ \text{ and }\ 
    i \in h_{out} \iff j \in h_{out},
\]
for all $h=(h_{in},h_{out})\in H$. 
\end{definition}
Note that if $i$ and $j$ are twin then $\deg^\pm(i,k)=\deg^\pm(j,k)$, and hence $A_{ik}=A_{jk}$, for all $k \in V\setminus\{i,j\}$. Moreover, $A_{ij}=A_{ji}=-\deg(i)=-\deg(j)\neq 0$ (we assume that there are no vertices of degree zero). Therefore, twin vertices cannot be duplicate vertices and vice versa.

Recall that, in oriented hypergraphs, every vertex has a sign for each hyperedge in which it is contained (Definition \ref{def:incidencematrix}). By reversing signs, we can define anti-duplicate and anti-twin vertices, as follows. 

\begin{definition}
Two vertices $i$ and $j$ are \emph{anti-duplicate} if the corresponding rows (equivalently, columns) of the adjacency matrix have opposite sign, that is, 
if $A_{ik}=-A_{jk}$ (or, equivalently, $A_{ki}=-A_{kj}$) for all $k\in V$. In particular, $A_{ij}=A_{ji}=A_{ii}=A_{jj}=0$.
\end{definition}

\begin{definition}
Two vertices $i$ and $j$ are \emph{anti-twin} if they belong exactly to the same set of hyperedges, with reversed orientations, that is, 
\[
    i \in h_{in} \iff j \in h_{out} \ \text{ and }\ 
    i \in h_{out} \iff j \in h_{in},
\]
for all $h=(h_{in},h_{out})\in H$. 
\end{definition}
Note that if $i$ and $j$ are anti-twin then $\deg^\pm(i,k)=\deg^\mp(j,k)$, and hence $A_{ik}=-A_{jk}$, for all $k \in V\setminus\{i,j\}$. Moreover, $A_{ij}=A_{ji}=\deg(i)=\deg(j)$. Therefore, anti-twin vertices cannot be anti-duplicate vertices and vice versa.

In \cite{MulasZhang} it is shown that a hypergraph that possesses $k$ duplicate vertices will have normalised Laplacian eigenvalue $1$ with multiplicity at least $k-1$. Similarly, in \cite{AndreottiMulas} it is shown that the presence of $k$ twin vertices produce the normalised Laplacian eigenvalue $0$ with multiplicity at least $k-1$. It is clear from these elementary results that structural repetition in a hypergraph naturally gives rise to repeated eigenvalues, yet the generality of these results is unclear. In the next section we interpret these results as part of a more general theory that relates structural redundancy (measured by the presence of hypergraph automorphisms) to hypergraph spectra.

\section{Redundancy and symmetry in hypergraphs}
\label{section:special}
 Informally, redundancy results in duplication of hypergraph structural features (such as vertices, hyperedges or collections of vertices and hyperedges). Moreover, from the results above it is expected that such repetition may leave a signature in its eigenvalue spectra. In this section, we show that these results are specific instances that arise from a general theory of hypergraph automorphisms, adapting the work in \cite{symm1,symm2,symm3} for hypergraphs and considering the normalised Laplacian.

\subsection{Hypergraph automorphisms} \label{section:automorphisms}
Informally, a hypergraph symmetry is a permutation of the vertices that preserves the hypergraph structure. More precisely, 
\begin{definition}
A \emph{hypergraph automorphism} is a permutation $p$ of the vertices of $\Gamma$ that preserves hyperedges, that is,
    \begin{equation*}
    p(h)=(p(h_{in}),p(h_{out})) \in H \quad \text{for all } h=(h_{in},h_{out}) \in H.
\end{equation*}
(We write $p(S)=\{p(s_1),\ldots,p(s_k)\}$ whenever $S =\{s_1,\ldots,s_k\} \subseteq V$.)
\end{definition}
Note that, since $p$ is invertible, it also induces a permutation on the hyperedges of $\Gamma$, $h \mapsto p(h)$. Moreover, hypergraph automorphisms induce automorphisms of the adjacency and Laplacian matrices, as follows. 

\begin{definition}\label{def:adj-automorphism}
An \emph{adjacency automorphism} is a permutation $p$ of the vertices of a hypergraph that preserves adjacency, that is, $A_{p(i)p(j)} = A_{ij}$ for all $1 \le i, j \le n$, where $A=A(\Gamma)$. We can write this in matrix form as
\begin{equation}\label{eqn:adjperm2}
     AP=PA,
\end{equation} 
where $P=(P_{ij})$ is the permutation matrix representing $p$, that is, $P_{ij}=1$ if $p(i)=j$, and 0 otherwise.  
\end{definition} 

\begin{definition} A \emph{Laplacian automorphism} is an adjacency automorphism $p$ that also preserves degrees, that is, $\deg (i)=\deg (p(i))$, for all $i=1,\ldots,n$. 
\end{definition}
Note that if $p$ is a Laplacian automorphism and $P$ is the permutation matrix representing $p$, then $\Delta P=P\Delta$ and $LP=PL$, that is, $p$ preserves both the Kirchhoff Laplacian and the normalised Laplacian. In general we the following inclusions hold.
\begin{proposition}
Every hypergraph automorphism is a Laplacian automorphism, and every Laplacian automorphism is an adjacency automorphism. The reciprocals of these statements hold if $\Gamma$ is a simple graph, but not in general.
\end{proposition}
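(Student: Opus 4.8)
The plan is to prove the two forward implications directly from the definitions and then treat the reciprocals separately in the general and the graph case. For the first implication, let $p$ be a hypergraph automorphism. Since $p$ permutes the hyperedges bijectively via $h\mapsto p(h)$ and respects the input/output structure, $i\in h_{in}\iff p(i)\in p(h_{in})$ and likewise for $h_{out}$, it preserves the number of hyperedges containing any given vertex, so $\deg(p(i))=\deg(i)$ for all $i$. The same orientation preservation shows that a pair $i,j$ is co-oriented (resp.\ anti-oriented) in $h$ exactly when $p(i),p(j)$ is co-oriented (resp.\ anti-oriented) in $p(h)$; summing over all hyperedges gives $\deg^{\pm}(p(i),p(j))=\deg^{\pm}(i,j)$ and hence $A_{p(i)p(j)}=A_{ij}$ by Definition \ref{def:adjacencymatrix}. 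Thus $p$ is an adjacency automorphism that preserves degrees, i.e.\ a Laplacian automorphism. The second implication is immediate, since a Laplacian automorphism is by definition an adjacency automorphism.

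For the reciprocals when $\Gamma$ is a simple graph, I would use that every adjacent pair is anti-oriented, so $A$ is the ordinary $0/1$ adjacency matrix and $\deg(i)=\sum_{k}A_{ik}$. Consequently an adjacency automorphism preserves row sums, hence degrees, and is automatically a Laplacian automorphism; and since the nonzero entries of $A$ are exactly the edges, preserving $A$ is the same as preserving the edge set, which recovers a hypergraph automorphism. The one point needing care here is the orientation bookkeeping of the hyperedges, so that for graphs one identifies all three automorphism groups with that of the underlying edge structure.

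To show that the reciprocals fail in general I would exhibit two oriented hypergraphs on $V=\{1,2,3\}$ and take $p=(1\,2)$. For the failure of the implication Laplacian $\Rightarrow$ hypergraph, let $h_1=(\{1\},\{2\})$ and $h_2=(\{1,2\},\{3\})$, so that
\begin{equation*}
A=\begin{pmatrix} 0&0&1\\ 0&0&1\\ 1&1&0 \end{pmatrix},\qquad (\deg 1,\deg 2,\deg 3)=(2,2,1).
\end{equation*}
Here $p$ preserves $A$ and all degrees, hence is a Laplacian automorphism, yet $p(h_1)=(\{2\},\{1\})\notin H$, so $p$ is not a hypergraph automorphism. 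For the failure of the implication adjacency $\Rightarrow$ Laplacian, let $h_1=(\{1\},\{3\})$, $h_2=(\{2\},\{3\})$ and the degradation-type hyperedge $h_3=(\{2\},\emptyset)$; this yields the same matrix $A$ but degree sequence $(1,2,2)$. Now rows $1$ and $2$ of $A$ still coincide, so $p$ is an adjacency automorphism, while $\deg(1)=1\neq 2=\deg(2)$ shows it is not a Laplacian automorphism.

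The forward implications and the graph reciprocals are routine; the substance lies in the failures, and the conceptual point driving them is that the adjacency matrix loses information, since the cancellation $\deg^{+}(i,j)-\deg^{-}(i,j)$ can hide both degree and the presence of hyperedges (cf.\ the Example following \eqref{eq:ChungLaplacian}). The main obstacle I anticipate is therefore not the algebra but the choice of minimal witnesses: one must find vertices whose adjacency rows agree while their degrees differ (which forces either a degenerate hyperedge such as $h_3$ above, or a larger balanced gadget), and separately a genuine hyperedge whose oriented image is realised by no hyperedge of $\Gamma$.
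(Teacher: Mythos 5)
Your forward implications are proved exactly as in the paper (via $\deg^{\pm}(p(i),p(j))=\deg^{\pm}(i,j)$ and $\deg(p(i))=\deg(i)$), and your treatment of the graph case matches the paper's, which simply declares it well-known; your remark about orientation bookkeeping is a fair caveat that the paper also glosses over. Where you genuinely diverge is in witnessing the failure of the reciprocals. The paper uses a single example --- two oppositely oriented edges between vertices $1$ and $2$ plus a singleton hyperedge on vertex $3$, so that $A=0$ --- and uses the transposition $(1\,3)$ to show that an adjacency automorphism need not be a Laplacian automorphism; it never exhibits a Laplacian automorphism that fails to be a hypergraph automorphism (indeed, in the paper's example the only nontrivial degree-preserving permutation, $(1\,2)$, \emph{is} a hypergraph automorphism). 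You instead give two separate minimal witnesses, one for each reciprocal: your first, $h_1=(\{1\},\{2\})$, $h_2=(\{1,2\},\{3\})$, is essentially the hypergraph $\Gamma$ from the paper's own example following Eq.~\eqref{eq:ChungLaplacian}, and the transposition $(1\,2)$ there is a Laplacian automorphism sending $h_1$ to $(\{2\},\{1\})\notin H$; your second uses a degenerate hyperedge $(\{2\},\emptyset)$ (of the same kind the paper itself employs) to decouple the adjacency rows from the degrees. Both examples check out. Your route is slightly longer but buys completeness: it explicitly covers both reciprocals, whereas the paper's proof as written only documents one of them.
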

\noindent{}Schematically, for graphs:
\begin{align*}
    \{\text{adjacency automorphisms}\} &=
    \{\text{Laplacian automorphisms}\}, \\ &= 
    \{\text{graph automorphisms}\},
\end{align*}
while for hypergraphs:
\begin{align*}
    \{\text{adjacency automorphisms}\} &\supseteq \{\text{Laplacian automorphisms}\}, \\ &\supseteq \{\text{hypergraph automorphisms}\}.
\end{align*}

\begin{proof}If $p$ is a hypergraph automorphism, then clearly $\deg^\pm(i,j)=\deg^\pm\left(p(i),p(j)\right)$ for all $i, j \in V$ and, in particular, $\deg(i)=\deg^+(i,i)=\deg(p(i))$. From Eq.~\eqref{eqn:adjmatrix} it is clear that $p$ is a Laplacian automorphism. Moreover, by definition, it is clear that any Laplacian automorphism is also an adjacency automorphism. The case when $\Gamma$ is a simple graph is well-known and straightforward.\newline
To see that the reciprocals do not necessarily hold in general, consider the following example. Let $\Gamma=(V,H)$ with vertex set $V=\{1,2,3\}$ and hyperedge set $H=\{h_1,h_2,h_3\}$, where
\begin{itemize}
    \item $h_1$ has $1$ as input and $2$ as output;
    \item $h_2$ has $2$ as input and $1$ as output;
    \item $h_3$ only contains the vertex $3$, as input.
\end{itemize}Then, the adjacency matrix of $\Gamma$ is the $3\times 3$ zero matrix, implying that any permutation of the vertices is an adjacency automorphism. However, the permutation $p$ such that $p(1)=3$ and $p(3)=1$ is not a Laplacian automorphism, since $\deg(1)=2\neq \deg(3)=1$.
\end{proof}

We begin by describing duplicate and twin vertices in terms of automorphisms.
\begin{proposition}\label{prop:duplicate-twin}
Let $\Gamma$ be an oriented hypergraph.
\begin{itemize}
    \item[(i)] If two vertices $i,j \in V$ are duplicate then the transposition $p=(i\ j)$ is an adjacency automorphism.
    \item[(ii)] If two vertices $i,j \in V$ are duplicate and $\deg(i)=\deg(j)$, then the transposition $p=(i\ j)$ is a Laplacian automorphism.
    \item[(iii)] If two vertices $i,j \in V$ are twin then the transposition $p=(i\ j)$ is a hypergraph automorphism.  
\end{itemize}
The converses of these statements are not necessarily true.
\end{proposition}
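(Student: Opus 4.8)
The plan is to verify each implication directly from the definitions, using that the transposition $p=(i\ j)$ fixes every vertex of $V\setminus\{i,j\}$ and only interchanges $i$ and $j$. For (i), I would read the duplicate condition $A_{ik}=A_{jk}$ (for all $k$) as saying that rows $i$ and $j$ of $A$ coincide; since $A$ is symmetric, columns $i$ and $j$ coincide as well. If $P$ is the permutation matrix of $p$, then $PA$ merely swaps rows $i,j$ of $A$ while $AP$ swaps columns $i,j$, so—those rows (respectively columns) being equal—both products return $A$, giving $AP=A=PA$, which is precisely the defining relation \eqref{eqn:adjperm2} of an adjacency automorphism. Equivalently, one checks $A_{p(k)p(l)}=A_{kl}$ by cases, the nontrivial ones all reducing to $A_{ik}=A_{jk}$ together with $A_{ii}=A_{jj}=A_{ij}=A_{ji}=0$.

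Part (ii) then follows at once: by (i) the map $p$ is already an adjacency automorphism, and it preserves degrees because $\deg(p(k))=\deg(k)$ is immediate for $k\notin\{i,j\}$ and reduces to the added hypothesis $\deg(i)=\deg(j)$ for $k\in\{i,j\}$. For (iii), I would show that the twin condition forces $p$ to fix every hyperedge: since $i\in h_{in}\iff j\in h_{in}$, the set $h_{in}$ contains both $i$ and $j$ or neither, and in either case the transposition leaves $h_{in}$ setwise unchanged, so $p(h_{in})=h_{in}$; the same argument gives $p(h_{out})=h_{out}$. Hence $p(h)=(p(h_{in}),p(h_{out}))=h\in H$ for all $h\in H$, so $p$ is a hypergraph automorphism.

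For the failure of the converses I would give small explicit counterexamples. Twin vertices settle (i) and (ii) simultaneously: if $i,j$ are twin, then $p=(i\ j)$ is a hypergraph automorphism by (iii), hence a Laplacian and an adjacency automorphism with $\deg(i)=\deg(j)$, yet $A_{ij}=-\deg(i)\neq0$ shows that $i,j$ are not duplicate (as noted after Definition~\ref{def:twin})—the single hyperedge $(\{1,2\},\{3\})$ already realises this. For (iii) I would use the two hyperedges $h_1=(\{1\},\{3\})$ and $h_2=(\{2\},\{3\})$: the transposition $(1\ 2)$ interchanges $h_1$ and $h_2$ and fixes everything else, so it is a hypergraph automorphism, but $1\in h_1$ while $2\notin h_1$, so $1$ and $2$ are not twin. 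I expect none of the steps to be genuinely hard; the only real subtlety lies in the converse direction and is conceptual rather than computational—recognising that the twin condition forces $p$ to fix each hyperedge (whereas a general hypergraph automorphism may permute the hyperedges), and that the duplicate condition imposes the extra constraint $A_{ij}=0$, which the relation $AP=PA$ alone does not.
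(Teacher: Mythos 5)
Your proof is correct and follows essentially the same route as the paper: the row/column-swap argument via the permutation matrix for (i), the degree check for (ii), and the observation that twins force $p$ to fix every hyperedge for (iii). The only difference is in the converses, where the paper describes the failure abstractly (for (i)--(ii), that $AP=PA$ tolerates $A_{ij}=A_{ji}\neq 0$; for (iii), that an automorphism may permute hyperedges nontrivially) while you supply explicit small hypergraphs realising exactly those situations --- a welcome concretisation, not a different argument.
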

(For anti-duplicate and anti-twin vertices, see Proposition \ref{prop:anti}.)
\begin{proof}
(i) Let $A=A(\Gamma)$ and let $P$ be the permutation matrix of the transposition $p=(i\ j)$ (see Definition \ref{def:adj-automorphism}). Clearly, $AP$ is the matrix $A$ with the $i$th and $j$th rows swapped, and $PA$ is the matrix $A$ with the $i$th and $j$th columns swapped. By Definition \ref{def:duplicate}, the $i$th row, respectively column, of $A$ equals the $j$th row, respectively column, of $A$. In particular, $AP=PA$ and $p$ is an adjacency automorphism. The converse is not true: $AP=PA$ if and only if the $i$th row (and column) of A equals the $j$th row (and column) of $A$, except possibly $A_{ii} = A_{jj} \neq A_{ij} = A_{ji}$. In that situation, $p=(i\ j)$ is an adjacency automorphism but $i$ and $j$ are not duplicate. \newline
(ii) This point follows easily from (i) and from the definition of Laplacian automorphism. The converse is not true: assume that $\deg(i)=\deg(j)$ and the $i$th row (and column) of $A$ equals the $j$th row (and column) of $A$, except $A_{ii} = A_{jj} \neq A_{ij} = A_{ji}$. In that case, $p=(i\ j)$ is a Laplacian automorphism but $i$ and $j$ are not duplicate. \newline
(iii) If $i$ and $j$ are twin and $h \in H$, then $i, j \in h_{in}$, or $i, j \in h_{out}$, or neither $i$ nor $j$ are vertices in $h$. In all cases, $p(h)=h$, that is, $p$ acts trivially on hyperedges. In particular, $p(h)\in H$ for all $h\in H$ and $p$ is a hypergraph automorphism. The converse is not true: it is easy to find a hypergraph automorphism of the form $p=(i\ j)$ not acting as trivially on hyperedges.
\end{proof}

Now we have formalised the concept of symmetry, or redundancy, in hypergraphs (as hypergraph automorphisms), we can deduce some structural and spectral results: namely, the effects of the presence of symmetry on hypergraph spectra. 

\subsection{Structural Results}
In this section, we discuss the effects of the presence of automorphisms, as defined above, on the hypergraph structure. To begin we note that the set of Laplacian automorphisms together with the composition of permutations forms a group, denoted $\aut(\Gamma)$. Next, we explain a decomposition of $\aut(\Gamma)$ into permutations with disjoint supports. 
\begin{definition}
    Given a permutation of the vertices $p$, its \emph{support} is
    \begin{equation*}
        \supp(p):=\{i \in V \mid p(i)\neq i\}.
    \end{equation*}Two permutations are \emph{disjoint} if their supports are non-intersecting.
\end{definition}

Following \cite{symm1,symm2}, we decompose $\aut(\Gamma)$ into a direct product of subgroups that naturally reflect structural redundancy in $\Gamma$. Let $S$ be a set of generators of $\aut(\Gamma)$ not containing the identity, and let $S=S_1\sqcup\ldots\sqcup S_l$ be the (unique) irreducible partition of $S$ into support-disjoint subsets. Let $\mathcal{P}_j$ be the subgroup generated by $S_j$. Then,
\begin{equation}\label{eq:geometricdecomposition}
    \aut(\Gamma)=\mathcal{P}_1\times\ldots\times \mathcal{P}_l
\end{equation}is the unique, irreducible direct product decomposition of $\aut(\Gamma)$ (a proof follows that of \cite[Equation 1]{symm1}, we omit details here). Since it relates to hypergraph symmetry we will call \eqref{eq:geometricdecomposition} the \emph{symmetric decomposition of $\aut(\Gamma)$}. Similarly, for each $j=1,\ldots,l$ we denote 
\begin{equation*}
    M_j:=\bigcup_{\tau\in S_j}\supp(\tau).
\end{equation*}
Using this notation, we call 
\begin{equation*}
    V:=V_0\sqcup M_1\sqcup\ldots \sqcup M_l
\end{equation*} 
the \emph{symmetric decomposition of $\Gamma$}, where $V_0$ is the set of fixed points, that is, 
\[
    V_0 = \{ i \in V \mid p(i)=i \text{ for all } p \in \aut(\Gamma)\}. 
\]
As with any action of a group on a set, we have the concept of a group \emph{orbit}.
\begin{definition}
    The \emph{orbit} of $i\in V$ is
    \begin{equation*}
        \mathcal{O}(i):=\{p(i):p\in\aut(\Gamma)\}.
    \end{equation*}
\end{definition}

From this definition, a natural measure of redundancy is:
    \begin{equation*}
        r=\frac{\#\mathcal{O}-1}{n},
    \end{equation*}
    where $\#\mathcal{O}$ is the number of orbits, and $n$ the number of vertices, of $\Gamma$. Note that $1\leq \#\mathcal{O}\leq n$, so $$0\leq r\leq \frac{n}{n-1}<1.$$ In particular, $r=0$ if and only if $\#\mathcal{O}=1$, that is, all vertices (reactants in a chemical reaction system) are structurally equivalent. On the other hand, $r=\frac{n}{n-1}$ if and only if $\#\mathcal{O}=n$, that is, if and only if $\aut(\Gamma)$ is trivial and therefore there is no structural redundancy in $\Gamma$. %Indeed, if $r>0$ (equivalently, $\#\mathcal{O} > 1$) and $n$ is large, then $1/r \frac{n}{\#\mathcal{O}}$, the average orbit size. 
    
    Thus, $r$ quantifies the extent to which the oriented hypergraph $\Gamma$ is constructed from repetition of structurally equivalent units. Due to the evolutionary processes that form them, biochemical reaction systems often contain duplicated elements \cite{vazquez2003}, which gives rise to local symmetries (i.e.\ permutations of nodes that are close in the hypergraph that preserve adjacency). In the absence of global symmetries (i.e.\ permutations of nodes that are distant in the hypergraph that preserve adjacency), $r$ is then a natural measure of structural redundancy: biochemical systems with a high redundancy are robust in the sense that damage or deletion of redundant vertices or units (i.e.\ individual chemical reactants, or small sub-systems of chemical reactions) do not cause catastrophic system failures, but rather can be absorbed by their replacements and so allow the system to continue to function normally. 

\subsection{Spectral Results}

Recall that the spectrum of a matrix is the multiset of its eigenvalues. Given $\Gamma$, we define the \emph{adjacency spectrum} of $\Gamma$ as the spectrum of $A(\Gamma)$, the \emph{Kirchhoff Laplacian spectrum} as the spectrum of $\Delta(\Gamma)$ and the \emph{normalised Laplacian spectrum} as the spectrum of $L(\Gamma)$. Each of these matrices has $n$ real eigenvalues and the corresponding eigenvectors are elements of $\mathbb{R}^n$, where $n$ is the number of vertices. We will see each eigenvector as a function $f:V\rightarrow \mathbb{R}^n$ and we will therefore call them \emph{eigenfunctions}. We will focus on the spectrum of the normalised Laplacian $L$ (or, equivalently, on the spectrum of the matrix $\mathcal{L}$ defined in \eqref{eq:ChungLaplacian}). 

We may factor out any redundancy to obtain the essential structural characteristics of a reaction system $\Gamma$. In particular, given a partition of the vertex set $V=V_1\sqcup\ldots\sqcup V_l$, we define: 

\begin{definition} The \emph{quotient matrix} of $\mathcal{L}$ is $Q(\mathcal{L}):=(Q_{\alpha\beta})_{\alpha\beta}$, where\begin{equation*}Q_{\alpha\beta}:=\frac{1}{|V_\alpha|}\cdot \sum_{i\in V_\alpha, j\in V_\beta }\mathcal{L}_{ij}.\end{equation*}
\end{definition}

Note that the quotient matrix can be also written in alternative form as follows. Let $K:=\textrm{diag}(|V_1|,\ldots,|V_l|)$ and let $S$ be the $n\times l$ characteristic matrix of the partition, that is, each column $K_j$ is the characteristic vector of the set $V_j$. Then,
\begin{equation*}
    Q(\mathcal{L})=K^{-1}S^\top \mathcal{L}S.
\end{equation*}

Because $Q(\mathcal{L})$ is not necessarily symmetric, it is not immediately clear if it has real spectrum. In fact, it does have real spectrum, as can be seen from the following definition. 

\begin{definition}
Given a partition of the vertex set $V=V_1\sqcup\ldots\sqcup V_l$, the \emph{symmetric quotient matrix} of $\mathcal{L}$ is the $l\times l$ symmetric matrix $Q^{\textrm{sym}}(\mathcal{L})$ with entries
\begin{equation*}Q^{\textrm{sym}}_{\alpha\beta}:=\frac{1}{\sqrt{|V_\alpha|\cdot|V_\beta|}}\cdot \sum_{i\in V_\alpha, j\in V_\beta }\mathcal{L}_{ij}.\end{equation*}
\end{definition}

Note that the symmetric quotient matrix of $\mathcal{L}$ can be written as
\begin{equation*}
    Q^{\textrm{sym}}=K^{-1/2}S^\top \mathcal{L}SK^{-1/2}=K^{1/2}QK^{-1/2}.
\end{equation*}
Hence, $Q^{\textrm{sym}}$ and $Q$ are similar, which implies that they are isospectral and thus $Q(\mathcal{L})$ has real spectrum. Moreover, $f$ is an eigenfunction with eigenvalue $\lambda$ for $Q^{\textrm{sym}}$ if and only if $K^{-1/2}f$ is eigenfunction of $\lambda$ for $Q$.

From here on, we shall always refer to the quotient matrix and to the symmetric quotient matrix of $\mathcal{L}$ with respect to the partition of $V$ into orbits. This partition is clearly \emph{equitable} \cite{Brouwer}, i.e.\ the row sum of each block of $\mathcal{L}$ with respect to the partition is constant.

With this notation we are now in a position to consider the spectrum of $L$ in terms of its underlying automorphism group, and therefore to dissect the effect of redundancy on its spectral properties. The following result is fundamental.

\begin{proposition}\label{prop:spectrum}
The spectrum of $L$ consists of the spectrum of $Q^{\textrm{sym}}(\mathcal{L})$ (with eigenfunctions that are constant on each orbit) together with the eigenvalues belonging to eigenfunctions that sum to zero on each orbit.
\end{proposition}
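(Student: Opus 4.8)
The plan is to decompose $\mathbb{R}^n$, the space of eigenfunctions, into two orthogonal $L$-invariant subspaces and analyse the action of $L$ (equivalently $\mathcal{L}$) on each. Let the orbit partition be $V=V_1\sqcup\ldots\sqcup V_l$. Define $U$ to be the subspace of functions that are constant on each orbit, and $W$ to be the subspace of functions that sum to zero on each orbit. First I would observe that $U$ and $W$ are orthogonal complements in $\mathbb{R}^n$ (with the standard inner product): a function constant on an orbit and a function summing to zero on that orbit are orthogonal block-by-block, and a dimension count gives $\dim U + \dim W = l + (n-l) = n$. Thus $\mathbb{R}^n = U \oplus W$, and if both are $\mathcal{L}$-invariant the full spectrum of $\mathcal{L}$ (hence of $L$, since they are isospectral via \eqref{eq:ChungLaplacian}) is the disjoint union of the spectra of $\mathcal{L}|_U$ and $\mathcal{L}|_W$.

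The key step is to establish invariance of $U$ and $W$ under $\mathcal{L}$, and this is where I expect the main work to lie. Since the orbit partition is equitable, the block row-sums of $\mathcal{L}$ are constant, which is precisely the condition guaranteeing that applying $\mathcal{L}$ to a function constant on orbits yields another function constant on orbits; hence $U$ is $\mathcal{L}$-invariant. The invariance of $W$ then follows because $\mathcal{L}$ is symmetric: the orthogonal complement of an invariant subspace under a symmetric operator is itself invariant. Concretely, for $w \in W$ and $u \in U$ we have $\langle \mathcal{L}w, u\rangle = \langle w, \mathcal{L}u\rangle = 0$ since $\mathcal{L}u \in U$, so $\mathcal{L}w \in U^\perp = W$.

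Next I would identify the restriction $\mathcal{L}|_U$ with the symmetric quotient matrix. Parametrise $U$ by the orthonormal-type basis coming from the normalised characteristic vectors $K^{-1/2}S$ of the orbits; under this identification the matrix of $\mathcal{L}|_U$ is exactly $Q^{\textrm{sym}}(\mathcal{L}) = K^{-1/2}S^\top \mathcal{L} S K^{-1/2}$. This is a direct computation using that the columns of $SK^{-1/2}$ form an orthonormal basis of $U$ (each orbit's normalised indicator has unit norm and distinct orbits are disjoint) together with equitability, which ensures $\mathcal{L}(SK^{-1/2})$ lands back in $U$ and so is fully captured by projecting onto this basis. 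Consequently the eigenvalues of $\mathcal{L}|_U$, with their orbit-constant eigenfunctions, are precisely the spectrum of $Q^{\textrm{sym}}(\mathcal{L})$.

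Finally I would assemble the pieces: the spectrum of $L$ equals the spectrum of $\mathcal{L}$, which splits as the spectrum of $\mathcal{L}|_U$ together with the spectrum of $\mathcal{L}|_W$; the former is the spectrum of $Q^{\textrm{sym}}(\mathcal{L})$ carried by orbit-constant eigenfunctions, and the latter consists of eigenvalues whose eigenfunctions lie in $W$, i.e.\ sum to zero on each orbit. The main obstacle is verifying cleanly that equitability yields both the invariance of $U$ and the identification $\mathcal{L}|_U = Q^{\textrm{sym}}$; once equitability is used correctly, the symmetry of $\mathcal{L}$ makes the complementary statement about $W$ essentially automatic.
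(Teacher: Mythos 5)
Your argument is correct in substance but takes a genuinely different, self-contained route. The paper assembles the result from three cited facts: the equitable-partition lemma \cite[Lemma 2.3.1]{Brouwer} applied to $Q(\mathcal{L})$, the similarity $Q^{\textrm{sym}}=K^{1/2}QK^{-1/2}$, and the isospectrality of $L$ and $\mathcal{L}$. You instead reprove the quotient lemma from scratch, directly for the symmetric quotient: the orthogonal decomposition $\mathbb{R}^n=U\oplus W$, invariance of $U$ from equitability, invariance of $W=U^\perp$ from symmetry of $\mathcal{L}$, and the identification of $\mathcal{L}|_U$ with $Q^{\textrm{sym}}=\bigl(SK^{-1/2}\bigr)^\top\mathcal{L}\bigl(SK^{-1/2}\bigr)$ via the orthonormal basis of normalised orbit indicators. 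This buys a proof that needs no external lemma and never passes through the non-symmetric $Q$; the paper's version is shorter but leans on the literature.

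One step is missing, and it is one the paper's proof includes explicitly as its last bullet. The proposition is a statement about eigenfunctions of $L$, while your decomposition and the properties ``constant on each orbit'' and ``sums to zero on each orbit'' are established for eigenfunctions of $\mathcal{L}$. Since $\mathcal{L}=D^{1/2}LD^{-1/2}$, an eigenfunction $f$ of $\mathcal{L}$ corresponds to the eigenfunction $D^{-1/2}f$ of $L$, and you must check that multiplication by $D^{\pm 1/2}$ preserves both properties. It does, but only because every automorphism in $\aut(\Gamma)$ is a Laplacian automorphism and hence degree-preserving, so $\deg$ is constant on each orbit and $D^{\pm 1/2}$ acts as a scalar on each block. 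Without this observation the spectral identity for $L$ follows from isospectrality, but the eigenfunction clauses in the statement do not. Add that one line and your proof is complete.
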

\begin{proof}Use the following facts:
\begin{itemize}
    \item By \cite[Lemma 2.3.1]{Brouwer}, the spectrum of $\mathcal{L}$ consists of the spectrum of $Q(\mathcal{L})$ (with eigenfunctions that are constant on each part of the partition) together with the eigenvalues belonging to eigenfunctions that sum to zero on each part of the partition.
    \item By the considerations above, $Q(\mathcal{L})$ is isospectral to  $Q^{\textrm{sym}}(\mathcal{L})$.
       \item By \cite[Remark 2.14]{MulasZhang}, $L$ is isospectral to $\mathcal{L}$ and $f$ is an eigenfunction with eigenvalue $\lambda$ for $\mathcal{L}$ if and only if $D^{1/2}f$ is eigenfunction of $\lambda$ for $L$.
       \item If $f$ is either constant in the parts of the partition, or it sums to zero on each part of the partition, then the same holds for $D^{1/2}f$, since the vertices belonging to the same set of the partition have the same degree. \qedhere
\end{itemize}
\end{proof}

This result indicates that the spectrum of $\Gamma$ can be split into pieces relating to redundant and unique structural features. To deconstruct this decomposition further, the following definition is useful:
\begin{definition}
The \emph{quotient network of} $\Gamma$, denoted $Q(\Gamma)$, is the (unique) weighted, undirected graph with self-loops that has adjacency matrix $Q^{\textrm{sym}}(\mathcal{L})$.
\end{definition}
Using this definition, we can rewrite Proposition \ref{prop:spectrum} as follows.
\begin{corollary}\label{cor:quotient}
The spectrum of $\Gamma$ consists of the adjacency spectrum of its quotient network (with eigenfunctions that are constant on each orbit) together with the eigenvalues belonging to eigenfunctions that sum to zero on each orbit.
\end{corollary}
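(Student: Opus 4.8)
The plan is to observe that this corollary is a direct definitional restatement of Proposition~\ref{prop:spectrum}, so the proof should amount to unwinding two pieces of terminology and substituting. First I would recall the conventions fixed earlier in this section: the \emph{spectrum of $\Gamma$} means the spectrum of the normalised Laplacian $L(\Gamma)$, and the partition under consideration throughout is the partition of $V$ into $\aut(\Gamma)$-orbits. With these conventions in place, Proposition~\ref{prop:spectrum} already decomposes the spectrum of $L$ into two parts: the spectrum of $Q^{\textrm{sym}}(\mathcal{L})$, realised by eigenfunctions that are constant on each orbit, together with the eigenvalues carried by eigenfunctions that sum to zero on each orbit.

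The second step is to identify the spectrum of $Q^{\textrm{sym}}(\mathcal{L})$ with the adjacency spectrum of the quotient network. By definition, $Q(\Gamma)$ is the weighted undirected graph with self-loops whose adjacency matrix is exactly $Q^{\textrm{sym}}(\mathcal{L})$; hence its adjacency spectrum is, by definition, the spectrum of $Q^{\textrm{sym}}(\mathcal{L})$. Substituting this equality into the statement of Proposition~\ref{prop:spectrum} yields precisely the claim.

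There is essentially no mathematical obstacle here — the corollary is a reformulation rather than a new result, and all the analytic content (the isospectrality of $L$, $\mathcal{L}$, $Q$ and $Q^{\textrm{sym}}$, and the constant-on-orbits versus sums-to-zero dichotomy) has already been established in Proposition~\ref{prop:spectrum}. The only points requiring care are bookkeeping ones: confirming that the quotient network is well defined (its adjacency matrix $Q^{\textrm{sym}}(\mathcal{L})$ is symmetric, so such a weighted graph with self-loops exists and is unique), and that the two occurrences of the word ``orbit'' refer to the same orbit partition used to construct $Q^{\textrm{sym}}(\mathcal{L})$. Once these are noted, the corollary follows immediately.
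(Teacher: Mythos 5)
Your proposal is correct and matches the paper's own argument: the corollary is deduced directly from Proposition~\ref{prop:spectrum} by noting that, by definition, the adjacency matrix of the quotient network $Q(\Gamma)$ is $Q^{\textrm{sym}}(\mathcal{L})$, so its adjacency spectrum is the spectrum of $Q^{\textrm{sym}}(\mathcal{L})$. The additional bookkeeping remarks you make (symmetry of $Q^{\textrm{sym}}$, consistency of the orbit partition) are sound but not needed beyond what the paper already records.
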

\begin{proof}It follows from Proposition \ref{prop:spectrum}, together with the fact that the adjacency matrix of $Q(\Gamma)$ is $Q^{\textrm{sym}}(\mathcal{L})$.
\end{proof}

To illustrate these ideas it is useful to consider an example. 

\begin{figure}[t!]
\centering
\includegraphics[width=.8\linewidth]{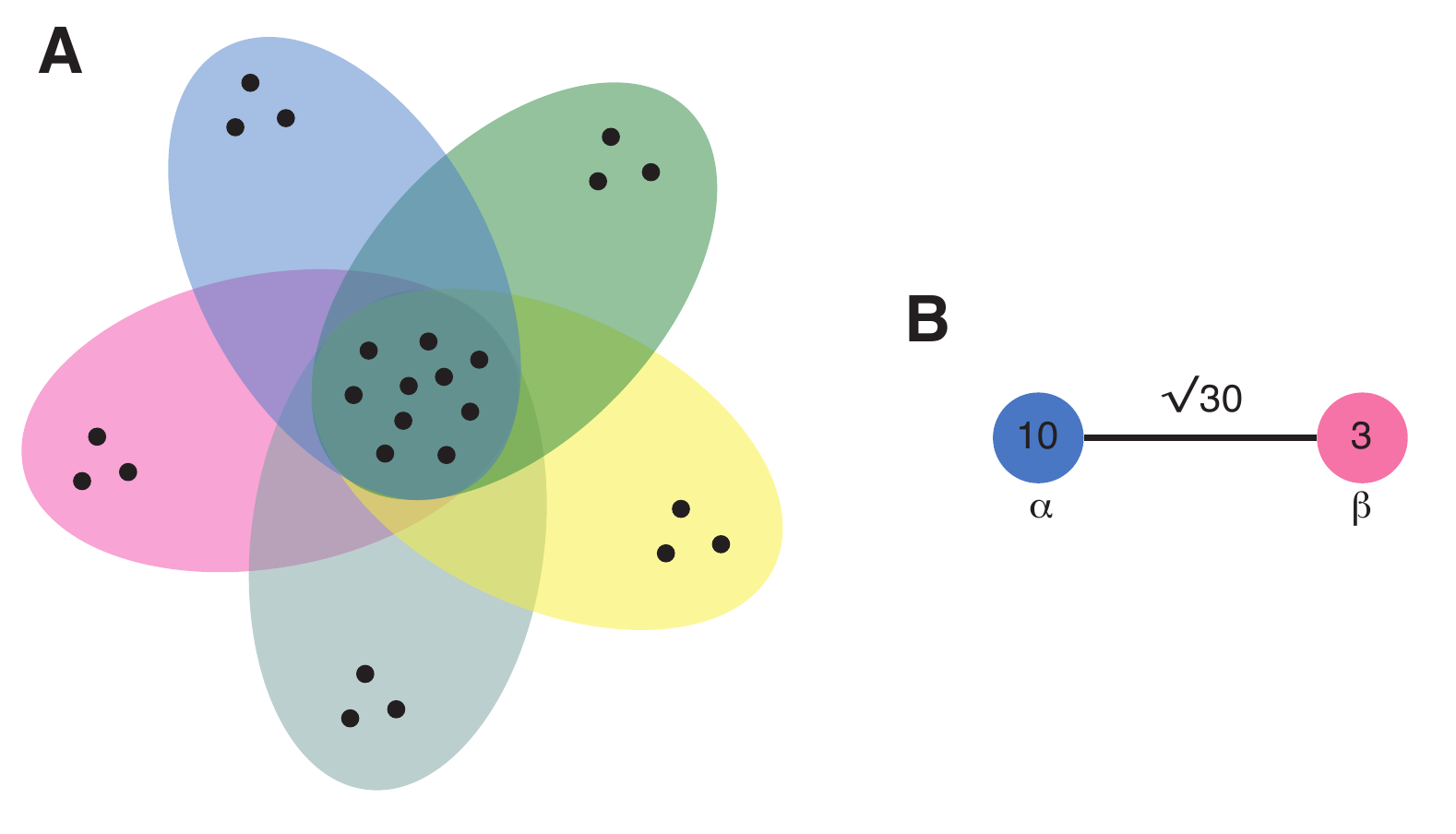}
\caption{\emph{The hyperflower}. (A) The $5$-hyperflower with $3$ twins on $25$ vertices. (B) Its quotient network. In the quotient network, $\alpha$ represents the core vertices of the hyperflower, while $\beta$ represents the peripheral vertices.}
\label{fig:quotienthyperflower}
\end{figure}

\begin{ex}[Hyperflowers]\label{ex:hyperflowers}Consider the $l$-hyperflower with $t$-twins introduced in \cite{AndreottiMulas} and shown in Figure \ref{fig:quotienthyperflower}(A). This is a hypergraph $\Gamma=(V,H)$ with only inputs whose vertex set can be written as $ V=W\sqcup V_1\sqcup \ldots\sqcup V_l$, where each $V_j$ has cardinality $l$, and the hyperedge set is given by$$H=\{h_j=W\cup V_j \mbox{ for } j=1,\ldots,l \}.$$

As shown in \cite[Lemma 6.12]{AndreottiMulas}, the spectrum of $\Gamma$ is given by:
\begin{itemize}
    \item $0$, with multiplicity $n-l$.
   \item $t$, with multiplicity $l-1$. As corresponding eigenfunctions one can choose the $f_j$'s, for $j\in\{2,\ldots,l\}$, that are $1$ on $V_1$, $-1$ on $V_j$ and $0$ otherwise.
   \item $n-tl+t$, and the constant functions are the corresponding eigenfunctions.\end{itemize}
   It's easy to see that $\Gamma$ has two orbits and, in this case, the adjacency automorphisms coincide with the Laplacian automorphisms. Thus, the redundancy of the hyperflower is $r=1/n$. Moreover, the quotient network only has two vertices $\alpha$ and $\beta$ representing the core vertices and the peripheral vertices of $\Gamma$, respectively. Its adjacency matrix is $Q^{\textrm{sym}}$, where
  \begin{align*}   Q^{\textrm{sym}}_{\alpha\beta}&=\frac{1}{\sqrt{|V_\alpha|\cdot|V_\beta|}}\cdot \sum_{i\in V_\alpha, j\in V_\beta }\left(-\frac{A_{ij}}{\sqrt{\deg(i)\deg(j)}}\right)\\
  &=\frac{1}{\sqrt{(n-tl)(tl)}}\cdot (n-tl)(tl)\cdot\frac{1}{\sqrt{l}}\\ &=\sqrt{(n-tl)t}
   \end{align*}while
   \begin{align*} Q^{\textrm{sym}}_{\alpha\alpha}&=\frac{1}{|V_\alpha|}\cdot\left( \sum_{(i,j):i\neq j\in V_\alpha}\left(-\frac{A_{ij}}{\deg (i)}\right)+\sum_{i\in V_\alpha}1\right)\\
   &=\frac{1}{|V_\alpha|}\cdot\biggl( |V_\alpha|\cdot (|V_\alpha|-1)+|V_\alpha|\biggr)\\ &=|V_\alpha|=n-tl
   \end{align*}
   and
   \begin{align*} Q^{\textrm{sym}}_{\beta\beta}&=\frac{1}{|V_\beta|}\cdot\left( \sum_{(i,j):i\neq j\in V_\beta}\left(-\frac{A_{ij}}{\deg (i)}\right)+\sum_{i\in V_\beta}1\right)\\
   &=\frac{1}{tl}\left(tl(t-1)+tl\right)=t.
   \end{align*}
   Therefore, the quotient network has edges $(\alpha,\beta)$, $(\alpha,\alpha)$ and $(\beta,\beta)$ with weights given by $\sqrt{(n-tl)t}$, $n-tl$ and $t$, respectively. \newline
   
 For the hyperflower in Figure \ref{fig:quotienthyperflower}, for instance, the edge $(\alpha,\beta)$ has weight $\sqrt{30}$, the loop $(\alpha,\alpha)$ has weight $n-tl=10$ and the loop $(\beta,\beta)$ has weight $t=3$ (Figure \ref{fig:quotienthyperflower}). Therefore
 \begin{equation*}
     Q^{\textrm{sym}}=\left(\begin{matrix}10 &\sqrt{30}\\ \sqrt{30} &3
     \end{matrix}\right).
 \end{equation*}It's easy to check that the eigenvalues of this matrix are $13$ and $0$. Therefore, in this case, Proposition \ref{prop:spectrum} tells us that:
 \begin{itemize}
     \item $0$ and $13$ are eigenvalues for the hyperflower, with eigenfunctions that are constant on the peripheral vertices and constant on the core vertices;
     \item The other eigenvalues of the hyperflower belong to eigenfunctions that sum to zero on the peripheral vertices.
 \end{itemize}
These results are clearly in accordance with the alternative calculations given above (see also \cite[Lemma 6.12]{AndreottiMulas}).% that both $13$ and $0$ are eigenvalues. We also know that the constants are eigenfunctions for $13$ (and clearly these are constant on each orbit). Furthermore, we also know from \cite[Lemma 6.12]{AndreottiMulas} that the eigenfunctions for $t$, an eigenvalue for $\Gamma$ that does not belong to the spectrum of the quotient matrix, sum to zero on each orbit.
 \end{ex}
 
\section{Signed Automorphisms}\label{section:signed}
The results presented so far straightforwardly extend the theory of automorphisms of graphs to hypergraphs. However, oriented hypergraphs have additional automorphisms induced by sign changes, that are distinct from those encountered for graphs. In this section, we define signed automorphisms, and study their effect on the hypergraph spectrum. Although signed automorphisms do not have an immediate biochemical interpretation, we include discussion of them here for mathematical completeness. 

As shown in \cite[Lemma 49]{Hypergraphs}, if we reverse the role of a vertex $v$ in all the hyperedges in which it is contained, i.e.\ if we let it become an input where it is an output and vice versa, the spectrum doesn't change, while the eigenfunctions differ by a change of sign on $v$. More generally, given an oriented hypergraph $\Gamma$ we can reverse the role of a subset of $k$ vertices $1,\ldots,k$ and obtain a hypergraph $\Gamma'$ which is isospectral to $\Gamma$. Thus, we can apply the theory of Laplacian automorphisms to $\Gamma'$ and translate the results to $\Gamma$. We formalize this idea as follows.
\begin{definition}
Let $\sigma:V=\{1,\ldots,n\}\rightarrow\{+1,-1\}$ be a sign function. Given a permutation $p$ of the vertices of $\Gamma$, we define $p^\sigma:V\rightarrow \{\pm 1,\ldots,\pm n\}$ by letting
\begin{equation*}
    p^\sigma(i):=\sigma(i)\cdot p(i)
\end{equation*}and we say that $p^\sigma$ is a \emph{signed permutation} of the vertices. 
\end{definition}
\begin{definition}
Given a sign function $\sigma:V=\{1,\ldots,n\}\rightarrow\{+1,-1\}$, we let $\sigma(\Gamma)$ be the oriented hypergraph constructed from $\Gamma$ by reversing the role of the vertices $i$ such that $\sigma(i)=-1$, in all hyperedges in which they are contained. We say that the quotient network $Q(\sigma(\Gamma))$ of $\sigma(\Gamma)$ is a \emph{signed quotient network} of $\Gamma$.
\end{definition}

Using these definitions we can now extend the theory of hypergraph automorphisms to signed automorphisms. In particular, 

\begin{definition}
A \emph{signed hypergraph automorphism} is a signed permutation $p^\sigma$ of the vertices of $\Gamma$ such that
    \begin{equation*}
    p(h)=(p(h_{in}),p(h_{out})) \in H(\sigma(\Gamma)) \quad \text{for all } h=(h_{in},h_{out}) \in H(\Gamma).
\end{equation*}
Similarly, a \emph{signed adjacency automorphism} is a signed permutation $p$ of the vertices of $\Gamma$ such that $\bigl(A(\Gamma)\bigr)_{p(i)p(j)} = \bigl(A(\sigma(\Gamma))\bigr)_{ij}$ for all $1 \le i, j \le n$ and a \emph{signed Laplacian automorphism} is a signed adjacency automorphism $p^\sigma$ that preserves degrees, that is, $\deg (i)=\deg (p(i))$, for all $i=1,\ldots,n$.
\end{definition} 
We denote by $\aut_{\textrm{signed}}(\Gamma)$ the group of 
signed Laplacian automorphisms of $\Gamma$. Moreover, 
\begin{definition}
    The \emph{signed orbit} of $i\in V$ is
    \begin{equation*}
        \mathcal{O}^\sigma(i):=\{p^\sigma(i):p^\sigma\in\aut_{\textrm{signed}}(\Gamma)\}.
    \end{equation*}
\end{definition}
In order to make functions on orbits well defined, given $f:V\rightarrow\mathbb{R}$ we let
    \begin{equation*}
        f(-i):=-f(i),\qquad\text{for }i\in V.
    \end{equation*}

Using this notation, the following proposition is the analogue of Proposition \ref{prop:duplicate-twin} for anti-twin and anti-duplicate vertices.

\begin{proposition}\label{prop:anti}
Let $\Gamma$ be an oriented hypergraph. Given $i,j\in V$, let $p$ be the transposition $p=(i,j)$ and let $\sigma$ be the sign function such that $\sigma(i)=-1$ and $\sigma(k)=+1$, for all $k\in V\setminus\{i\}$.
\begin{itemize}
    \item[(i)] If $i$ and $j$ are anti-duplicate then $p^\sigma$ is a signed adjacency automorphism.
    \item[(ii)]  If $i$ and $j$ are anti-duplicate and $\deg(i)=\deg(j)$, then $p^\sigma$ is a signed Laplacian automorphism.
        \item[(iii)] If $i$ and $j$ are anti-twin then $p^\sigma$ is a signed hypergraph automorphism.
\end{itemize}
The converses of these statements are not necessarily true.
\end{proposition}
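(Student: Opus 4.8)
The plan is to reduce Proposition~\ref{prop:anti} to the already-established Proposition~\ref{prop:duplicate-twin} by passing to the switched hypergraph $\sigma(\Gamma)$, exactly as anticipated in the discussion opening this section. The crucial observation is that, because $\sigma$ reverses the role of the single vertex $i$ and of no other vertex, switching has a transparent effect on the adjacency matrix: for all vertices $k,l$ one has $A(\sigma(\Gamma))_{kl}=\sigma(k)\sigma(l)\,A(\Gamma)_{kl}$, since for $k\neq l$ reversing $i$ interchanges the co-oriented and anti-oriented counts in each hyperedge containing $i$ (flipping the sign of row $i$ and column $i$ of $A$) and leaves every other entry untouched. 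Moreover, reversing a vertex's role changes neither the set of hyperedges it belongs to nor its degree, so $\deg_{\sigma(\Gamma)}=\deg_{\Gamma}$. Unwinding the definitions of signed automorphism, $p^\sigma$ is a signed adjacency (respectively Laplacian, hypergraph) automorphism of $\Gamma$ precisely when the underlying transposition $p=(i\ j)$ is an ordinary adjacency (respectively Laplacian, hypergraph) automorphism of $\sigma(\Gamma)$; this dictionary is what I will exploit.

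The first substantive step is to show that the ``anti'' relations in $\Gamma$ become the ordinary relations in $\sigma(\Gamma)$. For (i) and (ii), suppose $i$ and $j$ are anti-duplicate, so $A(\Gamma)_{ik}=-A(\Gamma)_{jk}$ for all $k$, with all entries indexed within $\{i,j\}$ vanishing. The switching identity then gives, for $k\notin\{i,j\}$, $A(\sigma(\Gamma))_{ik}=-A(\Gamma)_{ik}=A(\Gamma)_{jk}=A(\sigma(\Gamma))_{jk}$, while $A(\sigma(\Gamma))_{ij}$, $A(\sigma(\Gamma))_{ii}$ and $A(\sigma(\Gamma))_{jj}$ stay zero; hence rows $i$ and $j$ of $A(\sigma(\Gamma))$ coincide and $i,j$ are duplicate in $\sigma(\Gamma)$. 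For (iii), suppose $i$ and $j$ are anti-twin. Since switching exchanges $i$'s inputs and outputs while fixing $j$, in $\sigma(\Gamma)$ vertex $i$ is an input of a hyperedge exactly when it was an output in $\Gamma$, which by the anti-twin relation is exactly when $j$ is an input, and symmetrically for outputs; thus $i$ and $j$ are twin in $\sigma(\Gamma)$.

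With these correspondences, the conclusions follow by applying Proposition~\ref{prop:duplicate-twin} to $\sigma(\Gamma)$. In case (i), $i,j$ duplicate in $\sigma(\Gamma)$ makes $p=(i\ j)$ an adjacency automorphism of $\sigma(\Gamma)$, i.e.\ $p^\sigma$ a signed adjacency automorphism of $\Gamma$. In case (ii) the hypothesis $\deg(i)=\deg(j)$ transfers verbatim to $\sigma(\Gamma)$ by degree-invariance, so $p$ is a Laplacian automorphism of $\sigma(\Gamma)$ and $p^\sigma$ is a signed Laplacian automorphism. In case (iii), twinness in $\sigma(\Gamma)$ yields a hypergraph automorphism of $\sigma(\Gamma)$, hence a signed hypergraph automorphism of $\Gamma$. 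For the non-converses, I would transport the counterexamples from Proposition~\ref{prop:duplicate-twin} through the same correspondence: the configurations witnessing the failures there (an adjacency automorphism with $A_{ii}=A_{jj}\neq A_{ij}$, or a transposition acting nontrivially on hyperedges) pull back to signed automorphisms of $\Gamma$ whose vertices are neither anti-duplicate nor anti-twin.

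The main obstacle I anticipate is not any single computation but pinning down the dictionary cleanly: one must verify that the definitions of signed automorphisms are arranged so that ``signed automorphism of $\Gamma$'' and ``ordinary automorphism of $\sigma(\Gamma)$'' really are the same condition, handling in particular the bookkeeping of the diagonal and $(i,j)$ entries, where reversing a single vertex must be checked not to break symmetry. Once the switching identity $A(\sigma(\Gamma))_{kl}=\sigma(k)\sigma(l)\,A(\Gamma)_{kl}$ and degree-invariance are in place, the remainder is a direct translation of the proof of Proposition~\ref{prop:duplicate-twin}.
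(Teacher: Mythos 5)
Your proposal is sound and it fills in what the paper leaves entirely implicit: the published proof of Proposition~\ref{prop:anti} is the single sentence ``Analogous to the proof of Proposition~\ref{prop:duplicate-twin}.'' Where that phrasing suggests a direct verification (swap signed rows and columns of $A$, respectively check $p(h)\in H(\sigma(\Gamma))$ hyperedge by hyperedge, mirroring the duplicate/twin case), you instead reduce to Proposition~\ref{prop:duplicate-twin} applied to the switched hypergraph $\sigma(\Gamma)$. The ingredients you use are correct: reversing the role of $i$ alone exchanges $\deg^+(i,k)$ and $\deg^-(i,k)$ for every $k\neq i$, so $A(\sigma(\Gamma))_{kl}=\sigma(k)\sigma(l)A(\Gamma)_{kl}$ while all degrees are unchanged, and consequently anti-duplicate (resp.\ anti-twin) pairs in $\Gamma$ become duplicate (resp.\ twin) pairs in $\sigma(\Gamma)$. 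This route lets the switching principle (the paper's \cite[Lemma 49]{Hypergraphs}) do all the work and is arguably the cleaner way to record the argument.

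The step you defer as a bookkeeping ``obstacle'' is, however, the only genuinely delicate one, and it does not go through under the paper's definitions as literally written. Your dictionary --- $p^\sigma$ is a signed adjacency automorphism of $\Gamma$ if and only if $p$ is an ordinary adjacency automorphism of $\sigma(\Gamma)$ --- is not what the paper's formula says. The paper requires $A(\Gamma)_{p(k)p(l)}=A(\sigma(\Gamma))_{kl}$ for all $k,l$; with $p=(i\ j)$ and the stated $\sigma$ (which is $+1$ at $j$), the entries $k=j$, $l\notin\{i,j\}$ read $A_{il}=A_{jl}$, which combined with anti-duplicateness $A_{il}=-A_{jl}$ forces $A_{jl}=0$. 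So the literal condition is asymmetric in $i$ and $j$ and would make part (i) fail for any anti-duplicate pair with a nonzero row, whereas ``$i,j$ duplicate in $\sigma(\Gamma)$'' (your reading) does hold and does make the statement true; an alternative repair is to interpret the condition as $AP^\sigma=P^\sigma A$ for the signed permutation matrix and take $\sigma(i)=\sigma(j)=-1$. This mismatch is a defect of the paper's definition rather than of your strategy, but a complete proof must either adopt one of these readings explicitly or amend the definition; as written, your argument asserts the equivalence at exactly the point where it needs justification.
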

\begin{proof}
Analogous to the proof of Proposition \ref{prop:duplicate-twin}.
\end{proof}

We may now decompose the spectrum of $\Gamma$ taking into account signed automorphisms.

\begin{proposition}\label{prop:signedquotient}
Let $\sigma:V\rightarrow\{+1,-1\}$. The spectrum of $\Gamma$ consists of the adjacency spectrum of $Q(\sigma(\Gamma))$ (with eigenfunctions that are constant on each signed orbit) together with the eigenvalues belonging to eigenfunctions that sum to zero on each signed orbit.
\end{proposition}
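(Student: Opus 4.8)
The plan is to reduce the signed statement to the unsigned one already proved in Corollary~\ref{cor:quotient}, using $\sigma(\Gamma)$ as an intermediary. The guiding observation is that $\sigma(\Gamma)$ is itself an ordinary oriented hypergraph, so Corollary~\ref{cor:quotient} applies to it verbatim: the spectrum of $\sigma(\Gamma)$ splits into the adjacency spectrum of its quotient network $Q(\sigma(\Gamma))$, realised by eigenfunctions constant on the ordinary orbits of $\sigma(\Gamma)$, together with the eigenvalues carried by eigenfunctions that sum to zero on each such orbit. The entire proof then consists of transporting this statement back to $\Gamma$.

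First I would invoke the isospectrality result \cite[Lemma 49]{Hypergraphs}: reversing the role of the vertices with $\sigma(i)=-1$ leaves the normalised Laplacian spectrum unchanged and sends an eigenfunction $g$ of $\sigma(\Gamma)$ to the eigenfunction $f$ of $\Gamma$ with $f(i)=\sigma(i)g(i)$, for the same eigenvalue. Hence the spectrum of $\Gamma$ equals the spectrum of $\sigma(\Gamma)$, and I obtain an explicit sign-twisting bijection $g\mapsto f$ between their eigenfunctions.

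Next I would identify the combinatorial data of $\sigma(\Gamma)$ with the signed data of $\Gamma$. Concretely, I would verify that the ordinary Laplacian automorphisms of $\sigma(\Gamma)$ are in natural correspondence with the signed Laplacian automorphisms of $\Gamma$, so that the orbit partition of $\sigma(\Gamma)$ agrees with the partition of $V$ induced by the signed orbits $\mathcal{O}^\sigma(i)$, with the signs recorded through the convention $f(-i):=-f(i)$. Granting this correspondence, I would transport the two eigenfunction conditions across the bijection $g\mapsto f$: I would check that $g$ is constant on an orbit of $\sigma(\Gamma)$ exactly when its sign-twist $f$ is constant on the corresponding signed orbit, and that $g$ sums to zero on an orbit exactly when $f$ sums to zero on the signed orbit. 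Feeding this into the Corollary~\ref{cor:quotient} decomposition of $\sigma(\Gamma)$ then yields the claimed decomposition of the spectrum of $\Gamma$.

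The main obstacle I anticipate is the sign bookkeeping in this last step rather than any genuinely new idea. I need the convention $f(-i)=-f(i)$ to be precisely compatible with the sign-twisting bijection, so that ``constant on a signed orbit'' and ``sums to zero on a signed orbit'' are preserved; confirming this requires tracking how a signed automorphism $p^\sigma$ sending $i$ to $\pm j$ interacts with the twisted values $f(j)=\sigma(j)g(j)$, and checking that the orbit correspondence above really does align the two partitions. Once this compatibility is established, the result follows immediately from the unsigned Corollary~\ref{cor:quotient} together with \cite[Lemma 49]{Hypergraphs}.
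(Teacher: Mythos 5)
Your proposal is correct and follows essentially the same route as the paper: apply Corollary~\ref{cor:quotient} to $\sigma(\Gamma)$ and transport the decomposition back to $\Gamma$ via the sign-twisting isospectral bijection of \cite[Lemma 49]{Hypergraphs}. The only difference is that you spell out the orbit correspondence and the sign bookkeeping that the paper's two-line proof leaves implicit.
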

\begin{proof}
By \cite[Lemma 49]{Hypergraphs}, it easily follows that $\lambda$ is an eigenvalue for $\sigma(\Gamma)$ with eigenfunction $f$ if and only if $\lambda$ is an eigenvalue for $\Gamma$ with eigenfunction $\sigma\cdot f$, where $\sigma f(i):=\sigma(i)\cdot f(i)$. Together with Corollary \ref{cor:quotient}, this proves the claim.
\end{proof}

To illustrate these ideas we again consider an example. 
\begin{ex}[Signed Hyperflower]\label{ex:signedhyperflower}
For the hyperflower in Example \ref{ex:hyperflowers} all vertices are inputs. If we let one vertex $v$ become an output in all hyperedges in which it is contained, then the theory of (unsigned) Laplacian automorphisms cannot detect this reversal. However, by choosing the sign function $\sigma:V\rightarrow\{+1,-1\}$ that has value $-1$ on $v$ and value $+1$ otherwise, and applying Proposition \ref{prop:signedquotient} its effect can be detected.
\end{ex}

These results finally give us an alternative notion of redundancy.
\begin{definition}
The \emph{signed redundancy} is
\begin{equation*}
    r_{\textrm{signed}}:=\min_{\sigma:V\rightarrow\{+1,-1\}}\frac{\#\mathcal{O}^\sigma-1}{n}
\end{equation*}
\end{definition}
By choosing $\sigma:V\rightarrow\{+1,-1\}$ with $+1$ on all vertices, we have $\mathcal{O}^\sigma(i)=\mathcal{O}(i)$ for each $i\in V$, and therefore 
\begin{equation*}
    r_{\textrm{signed}}=\min_{\sigma:V\rightarrow\{+1,-1\}}\frac{\#\mathcal{O}^\sigma-1}{n}\leq \frac{\#\mathcal{O}-1}{n}=r.
\end{equation*}
Hence, the signed redundancy is more precise than the unsigned redundancy. In the case of Example \ref{ex:signedhyperflower}, for instance, $r_{\textrm{signed}}=1/n$ while $r=2/n$.

\section{Example: Basic Enzyme Reactions}
In order to illustrate this theory, consider the basic enzyme reactions:
\begin{equation}\label{eq:backreaction}
    E+S \stackrel[k_f]{k_r}{\leftrightarrows} ES \xrightarrow{k_{cat}} E+P,
\end{equation}
where $k_f$, $k_r$ and $k_{cat}$ are reaction rates. To explore the geometry of this system we will consider two hypergraph models in which the above chemical substances are represented by vertices, and the reactions are represented by hyperedges. The first hypergraph model accounts for forward reactions only, and so represents the system: 
\begin{equation}\label{eq:systemenz}
    E+S \xrightarrow{k_f} ES \xrightarrow{k_{cat}} E+P.
\end{equation}
We let $\Gamma:=(V,H)$, be a hypergraph, where the vertex set is $V:=\{E,S,ES,P\}$, the hyperedge set is $H:=\{h_1,h_2\}$, and the oriented hyperedges are $h_1:=(\{E,S\},\{ES\})$ and $h_2:=(\{ES\},\{E,P\})$. This hypergraph is illustrated in Fig. \ref{fig:enzyme}.

    \begin{figure}[t!]
        \centering
        \includegraphics[width=7cm]{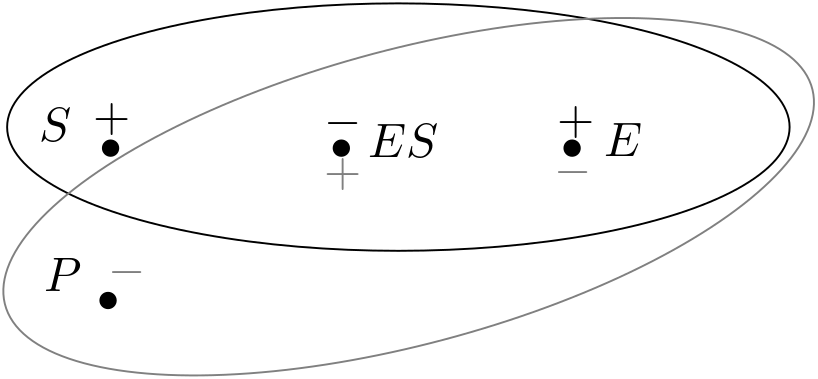}\caption{The hypergraph representing the system given in Eq.~\eqref{eq:systemenz}. }\label{fig:enzyme}
    \end{figure}

The spectrum of $\Gamma$, which is a  $2$-hyperflower with $1$ twin on $4$ vertices, is $0, 0, 1, 3$. In this case, there are exactly two non-zero eigenvalues because there are two hyperedges and these hyperedges are independent of each other (cf.\ \cite{Hypergraphs}). The largest eigenvalue is $3$ because the hypergraph is \emph{bipartite} and each reaction contains exactly three substances (cf.\ \cite{Sharp}). Finally, $1$ is an eigenvalue because the vertices $S$ and $P$ are anti-duplicate. A corresponding eigenfunction is $f:V\rightarrow\mathbb{R}$ such that $f(S)=f(P)=1$ and $f(ES)=f(S)=0$. Since there are no hypergraph automorphisms, the redundancy is 
\begin{equation*}
        r=\frac{\#\mathcal{O}-1}{4}=\frac{3}{4}.
\end{equation*}
However, because $S$ and $P$ are anti-duplicate, and $E$ and $ES$ are anti-twin the system possess \emph{signed} automorphisms. These symmetries are not present in graph representations of this system, and so represent features of the chemical reaction system that are specifically identified by the hypergraph theory. Thus, the signed redundancy differs from the redundancy. In this case, the signed redundancy is
    \begin{equation*}
    r_{\textrm{signed}}=\min_{\sigma:V\rightarrow\{+1,-1\}}\frac{\#\mathcal{O}^\sigma-1}{n}=\frac{1}{4},
    \end{equation*}
and the minimum is achieved for the function $\sigma:V\rightarrow\{+1,-1\}$ that has value $1$ on $S$, $ES$ and value $-1$ on $E$, $P$. Its signed orbits are $\{S,P\}$ and $\{E,ES\}$. It should be noted that these orbits are coincident with the conservation laws of the dynamics, but this is not always the case. Conservation laws do not relate directly to automorphisms or signed automorphisms, but rather are related to properties of another Laplacian, as discussed in \cite{Hypergraphs}. 

These results demonstrate, via a practical empirical (rather than theoretical) example, that there are geometric properties that are detected by the signed automorphisms and are not detected by the automorphisms. However, this example only accounts for forward reactions. In order to take the backward reaction in the system described by Eq.\ \eqref{eq:backreaction} into account, we consider the hypergraph $\Gamma\cup h_3:=(V,H\cup h_3)$, where $h_3:=(\{ES\},\{E,S\})$. The eigenvalues of $\Gamma\cup h_3$ coincide with the eigenvalues of $\Gamma$, counted with multiplicity, but the eigenfunctions and the redundancy change.

The hypergraph $\Gamma\cup h_3$ has two non-zero eigenvalues (as does $\Gamma$). In this case, although there are three hyperedges (i.e., reactions), only two of them are independent, since $h_3$ and $h_1$ are inverse of one another (cf.\ \cite{Hypergraphs}). Moreover, as in the case of $\Gamma$, the largest eigenvalue is $3$ because the hypergraph is \emph{bipartite} and each reaction involves three substances (cf.\ \cite{Sharp}). Finally, although $P$ and $S$ are not anti-duplicate in this case, $\Gamma\cup h_3$ is isospectral with $\Gamma$ and so $\Gamma\cup h_3$ inherits the eigenvalue $1$ due to the fact that $S$ and $P$ are anti-duplicate in $\Gamma$. This endows the $\Gamma\cup h_3$ with a \emph{shadow} symmetry. As with $\Gamma$, there are no automorphisms, and so 
       \begin{equation*}
        r=\frac{\#\mathcal{O}-1}{4}=\frac{3}{4}.
    \end{equation*}
    
    However, in this case, the signed redundancy is
     \begin{equation*}
    r_{\textrm{signed}}=\min_{\sigma:V\rightarrow\{+1,-1\}}\frac{\#\mathcal{O}^\sigma-1}{n}=\frac{1}{2},
    \end{equation*}
    and this minimum is achieved for $\sigma:V\rightarrow\{+1,-1\}$ that has value $1$ on $S$, $ES$, $P$ and value $-1$ on $E$. Its signed orbits are $\{S\}$, $\{P\}$ and $\{E,ES\}$. The difference arises because $S$ and $P$ are now not anti-duplicate, hence they do not belong to a same signed orbit. 
    
    This second example has shown that, while adding reversed hyperedges (reactions) does not change the hypergraph spectrum, the eigenfunctions, signed automorphisms and signed redundancy can change. Moreover, certain spectral properties of the hypergraph that includes reversed hyperedges are derived from the structure of the simpler hypergraph without them. Thus, there is a motivation for studying simplified systems without losing structural information. 

\section*{Discussion}
Biochemical reaction systems often contain duplication which manifests as symmetry in their underlying hypergraphs. Here, we have introduced and studied automorphisms for oriented hypergraphs. We focused on the normalised Laplacian, which is known to encode many qualitative properties of a hypergraph, and have generalized the known theory for graphs \cite{Chung,Brouwer}. We have shown that, while the generalisation to the case of classical hypergraphs is intuitive and relatively straightforward, for a complete theory in the case of \emph{oriented} hypergraphs, one needs additional constructions, such as the \emph{signed} automorphisms and \emph{signed} redundancy. Thus, the general theory we have introduced extends that of graphs and hypergraphs to the more general -- and appropriate for modelling complicated biochemical reaction systems inside a cell -- case of oriented hypergraphs. To illustrate this theory we have shown, with a simple practical example, that it can be used to practically study redundancy in biochemical systems. 

There has been some prior work on spectral graph theory applied to biochemical networks, see for instance \cite{symm1,symm2,symm3,bio2006,bio2007,bio2009,bio2009JJ,bio2019}, and there is a growing literature on how to use hypergraphs for modelling biochemical networks. In \cite{Estrada2006}, for example, the concepts of subgraph centrality and clustering are generalised to the case of hypergraphs, and various practical examples, including examples from biology, are given. Similarly, in \cite{curvature}, a notion of curvature for hypergraphs is proposed and applied to the analysis of the E.\ coli metabolism. In \cite{Cellularnetworks}, it is argued -- using a range of practical examples -- that biological networks that are typically modelled as graphs can also be fruitfully modelled using hypergraphs. Some practical algorithms that do not involve spectral theory, as well as network statistics for hypergraphs, are also discussed. In \cite{Stadler2015}, some mathematical foundations (which again do not include spectral theory) for the study of hypergraphs in the context of chemical reaction systems and biological evolution are given. Similarly, in \cite{2014signaling,2019signaling}, hypergraphs are used as a model for signaling pathways in cellular biology. In \cite{2014signaling}, in particular, it is noted that, since hypergraph theory is less well-known than graph theory, there is a need to develop theoretical and algorithmic foundations for hypergraphs.

However, although there is a growing literature on both spectral graph theory applied to biology and hypergraph modelling of biochemical networks, we are still lacking theoretical tools needed to apply spectral hypergraph theory to biochemical networks. In this paper, we have taken a step further in this direction. 

In the future it will be interesting to analyze large, complex biochemical networks using spectral hypergraph methods. There are a number of publicly available, curated, repositories of biochemical reaction systems \cite{rep1,rep2,rep3,Link,KEGG}. Once in a hypergraph format, spectral properties of such empirical networks can then be determined by considering their associated matrices such as the normalised Laplacian, which we have focused on here. As noted, the eigenvalues of these matrices encode many important qualitative properties of the underlying hypergraph \cite{Hypergraphs,Master-Stability,Sharp,MulasZhang,AndreottiMulas,spectralclasses,hyp2014,hyp2015,hyp2019,chen2018,ReffRusnak}, including its symmetries and associated redundancy, and these properties, in turn, shed light on the essential structural properties of the system under study. By converting a geometric problem into an algebraic one the benefits of this approach are numerous, since they make the structure of the system amenable to detailed analysis. These benefits include computational aspects, since the spectrum of a square matrix can be computed with relatively little computational effort.

Moreover, the tools presented here may also be useful in the
analysis of chemical reaction networks more generally -- particularly in applications that involve complex sets of reactions, for example as encountered in some industrial processes, where redundancy may also be ubiquitous.

\begin{acknowledgements}
 We thank the anonymous referees for valuable suggestions and comments. 
\end{acknowledgements}

% Authors must disclose all relationships or interests that 
% could have direct or potential influence or impart bias on 
% the work: 
%
\section{Conflict of interest}
The authors declare that they have no conflict of interest.

% BibTeX users please use one of
%\bibliographystyle{spbasic}      % basic style, author-year citations
%\bibliographystyle{spmpsci}       %mathematics and physical sciences
\bibliographystyle{spphys}       % APS-like style for physics
\bibliography{Geometry}   % name your BibTeX data base

\end{document}